\author{Steven V Sam}
\address{Department of Mathematics, University of California, San Diego, CA}
\email{\href{mailto:ssam@ucsd.edu}{ssam@ucsd.edu}}
\urladdr{\url{http://math.ucsd.edu/~ssam/}}
\author{Andrew Snowden}
\address{Department of Mathematics, University of Michigan, Ann Arbor, MI}
\email{\href{mailto:asnowden@umich.edu}{asnowden@umich.edu}}
\urladdr{\url{http://www-personal.umich.edu/~asnowden/}}
\thanks{SS was partially supported by NSF DMS-1500069, DMS-1651327, and a Sloan Fellowship. AS was supported by NSF DMS-1453893.}
\title{Linear independence of powers}
\date{July 4, 2019}
\begin{document}

\maketitle

Fix an algebraically closed field $\bk$. We prove the following result:

\begin{theorem} \label{mainthm}
Let $R$ be an integral $\bk$-algebra, and let $f_1, \ldots, f_r \in R$ be non-zero elements such that $f_i/f_j \not\in \bk$ for all $i \ne j$. Then there exists an integer $1 \le e \le r!$ such that $f_1^e, \ldots, f_r^e$ are $\bk$-linearly independent.
\end{theorem}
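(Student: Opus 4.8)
The plan is to avoid any induction on $r$ or valuation theory and instead prove the statement by a single direct computation: evaluate the powers $f_i^e$ at a carefully chosen tuple of $\bk$-points and reduce to the invertibility of an exponential Vandermonde matrix.

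First I would reduce to the case that $R$ is finitely generated over $\bk$, by replacing $R$ with its subalgebra $\bk[f_1,\dots,f_r]$; this changes neither the hypotheses nor the conclusion, since linear (in)dependence of the $f_i^e$ is an intrinsic property of those elements. Then $X:=\operatorname{Spec}R$ is an irreducible affine variety over $\bk$, and since $\bk$ is algebraically closed the $r$-fold product $X^r$ is again an irreducible variety, with coordinate ring the domain $R^{\otimes r}$. For each $\pi\in\mathfrak S_r$ introduce the regular function $\Phi_\pi:=f_{\pi^{-1}(1)}\otimes\cdots\otimes f_{\pi^{-1}(r)}\in R^{\otimes r}$. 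The crucial observation is that the $r!$ functions $\Phi_\pi$ are pairwise distinct: given $\pi\ne\pi'$, pick $j$ with $a:=\pi^{-1}(j)\ne b:=(\pi')^{-1}(j)$ and restrict $\Phi_\pi-\Phi_{\pi'}$ to the copy of $X$ obtained by fixing all factors other than the $j$th at $\bk$-points at which no $f_i$ vanishes; the restriction is a nonzero scalar multiple of $c\,f_a-c'f_b$ with $c,c'\in\bk^\times$, which is nonzero because $f_a/f_b\notin\bk$. Hence each $\Phi_\pi-\Phi_{\pi'}$ cuts out a proper closed subset of $X^r$, as does each $\{f_i\circ p_j=0\}$, so the complement of these finitely many proper closed subsets is a nonempty open subset of the irreducible variety $X^r$ and therefore contains a $\bk$-point $P=(P_1,\dots,P_r)$. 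For this $P$, the scalars $b_\pi:=\Phi_\pi(P)=\prod_j f_{\pi^{-1}(j)}(P_j)\in\bk^\times$ are pairwise distinct.

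Now expand determinants. For every integer $e\ge1$ one has $\det\bigl(f_i(P_j)^e\bigr)_{1\le i,j\le r}=\sum_{\pi\in\mathfrak S_r}\operatorname{sgn}(\pi)\,b_\pi^{\,e}$, by reindexing the sum over permutations. If this were zero for all $e\in\{1,\dots,r!\}$, the $r!$ vectors $(b_\pi,b_\pi^2,\dots,b_\pi^{r!})$ would satisfy the nontrivial $\bk$-linear relation with coefficients $\operatorname{sgn}(\pi)\ne0$; but the $r!\times r!$ matrix $(b_\pi^{\,e})_{\pi\in\mathfrak S_r,\,1\le e\le r!}$ equals $\operatorname{diag}(b_\pi)$ times a Vandermonde matrix in the distinct nonzero scalars $b_\pi$, hence is invertible — a contradiction. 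So some $e$ with $1\le e\le r!$ satisfies $\det\bigl(f_i(P_j)^e\bigr)\ne0$, and for that $e$ any relation $\sum_i\lambda_i f_i^e=0$, evaluated at $P_1,\dots,P_r$, forces $\lambda=0$; thus $f_1^e,\dots,f_r^e$ are $\bk$-linearly independent. Note that the bound $r!=|\mathfrak S_r|$ is exactly the number of sample exponents the Vandermonde step requires.

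The only step that needs genuine care — and the only place the hypothesis $f_i/f_j\notin\bk$ is used — is the claim that the $\Phi_\pi$, equivalently the $b_\pi$, can be made pairwise distinct; the "restrict to a coordinate copy of $X$" argument above is what I expect to be the subtle point, while everything after it is formal.
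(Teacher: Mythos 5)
Your proof is correct, and its engine --- evaluate at $r$ points chosen so that the $r!$ permutation products are distinct and nonzero, expand $\det(f_i(P_j)^e)$ by the Leibniz formula, and use a Vandermonde matrix in the distinct values $b_\pi$ to find a good exponent $e \le r!$ --- is exactly the paper's. Where you genuinely diverge is in how the evaluation points are produced. The paper's Lemma~\ref{lem1} builds $x_1, \ldots, x_r$ one at a time by induction on $s$, at each stage intersecting finitely many nonempty open subsets of the irreducible variety $X$ itself; to make the induction close, it must separate the products $f_{i_1}(x_1) \cdots f_{i_s}(x_s)$ over \emph{all} tuples in $[r]^s$, not just permutations. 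You instead choose a single point of $X^r$ avoiding the zero loci of the finitely many nonzero functions $\Phi_\pi - \Phi_{\pi'}$ and $f_i \circ p_j$, which lets you demand only the weaker separation of permutation products and eliminates the induction. The price is that you need $X^r$ to be irreducible, i.e., that $R^{\otimes r}$ is a domain; this holds because $\bk$ is algebraically closed, but it is a less elementary input than anything the paper uses and should be stated explicitly --- it is precisely where your argument would break over a non-closed field, consistent with the paper's closing remark that geometric integrality is the right hypothesis there. Your verification that $\Phi_\pi \ne \Phi_{\pi'}$ by restricting to a slice is sound: if $c f_a = c' f_b$ with $c, c' \in \bk^\times$ then $f_a/f_b \in \bk$, contrary to hypothesis, and the required slice exists because the locus where all $f_i$ are nonzero is a nonempty open subset of $X$ containing a $\bk$-point. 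The remaining steps (the sign bookkeeping $\sgn(\pi) = \sgn(\pi^{-1})$ in the reindexing, and the factorization of $(b_\pi^e)$ as a diagonal matrix times a Vandermonde matrix) all check out.
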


\begin{remark}
We were motivated by \cite[Conjecture 16]{KTB}. This conjecture takes $R$ to be a polynomial ring over the real numbers, and asks for a bound $E$ depending on the number of variables and $r$ such that $f_1^e, \dots, f_r^e$ are linearly independent whenever $e \ge E$. Our methods do not seem able to obtain this result. We note that such a bound does not exist for general domains: consider the rings $R=\bk[x_1,\dots,x_d]/ (x_1^s + \cdots + x_d^s)$ with $f_i=x_i$, for example.
\end{remark}

We may as well replace $R$ with the subalgebra generated by the $f_i$'s. Thus, in what follows, we assume that $R$ is finitely generated. Thus $X=\Spec(R)$ is an integral scheme of finite type over $\bk$. If $R=\bk$ then the theorem is clear, so in what follows we assume $\dim(X) \ge 1$. The following is the key lemma:

\begin{lemma} \label{lem1}
Let $1 \le s \le r$ be given. There exist $\bk$-points $x_1, \ldots, x_s$ of $X$ such that the following two conditions hold:
\begin{enumerate}
\item $f_i(x_j) \ne 0$ for all $1 \le i \le r$ and $1 \le j \le s$.
\item Given $(i_1, \ldots, i_s) \ne (j_1, \ldots, j_s)$ in $[r]^s$, we have $f_{i_1}(x_1) \cdots f_{i_s}(x_s) \ne f_{j_1}(x_1) \cdots f_{j_s}(x_s)$.
\end{enumerate}
\end{lemma}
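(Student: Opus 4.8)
The plan is to prove the lemma by induction on $s$, choosing the points $x_1, \ldots, x_s$ one at a time. The key point is that once $x_1, \ldots, x_{s-1}$ are fixed, every instance of condition (b) that involves $x_s$ becomes a requirement of the form ``a specific element of $R$ does not vanish at $x_s$'', and the hypothesis $f_i/f_j \notin \bk$ is exactly what makes those elements non-zero; then one simply picks $x_s$ avoiding finitely many proper closed subsets. Note that condition (a) must be carried along in the inductive statement, because it is what keeps certain scalars invertible.

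So, start the induction at $s = 0$ (nothing to prove), and suppose $x_1, \ldots, x_{s-1}$ have been chosen, $s \ge 1$, satisfying (a) and (b) for tuples in $[r]^{s-1}$. Fix a pair $(i_1, \ldots, i_s) \ne (j_1, \ldots, j_s)$ in $[r]^s$ and set $a = f_{i_1}(x_1) \cdots f_{i_{s-1}}(x_{s-1})$, $b = f_{j_1}(x_1) \cdots f_{j_{s-1}}(x_{s-1})$ (empty products being $1$); these lie in $\bk$, and $a, b \ne 0$ by (a) for $x_1, \ldots, x_{s-1}$. The relevant instance of (b) asks that $g := a f_{i_s} - b f_{j_s}$ not vanish at $x_s$, and I claim $g \ne 0$. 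If $(i_1, \ldots, i_{s-1}) = (j_1, \ldots, j_{s-1})$ then $a = b \ne 0$ and $i_s \ne j_s$, so $g = a(f_{i_s} - f_{j_s}) \ne 0$ because $f_{i_s} \ne f_{j_s}$. Otherwise $(i_1, \ldots, i_{s-1}) \ne (j_1, \ldots, j_{s-1})$, so $a \ne b$ by (b) for $x_1, \ldots, x_{s-1}$: if $i_s = j_s$ then $g = (a-b)f_{i_s} \ne 0$, and if $i_s \ne j_s$ then $g = 0$ would give $f_{i_s}/f_{j_s} = b/a \in \bk$, a contradiction. Since $R$ is a domain and $g \ne 0$, $V(g)$ is a proper closed subset of $X$, and likewise each $V(f_i)$ is proper since $f_i \ne 0$.

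Finally, choose $x_s$ to be a $\bk$-point of $X$ outside the union of $V(f_1), \ldots, V(f_r)$ (which gives (a) for $x_s$) and the sets $V(a f_{i_s} - b f_{j_s})$ ranging over all pairs as above (which gives (b)). This union is finite and $X$ is irreducible, so it is a proper closed subset; and since $X$ is of finite type over the algebraically closed field $\bk$, its $\bk$-points are dense (Nullstellensatz), so the non-empty open complement contains one. This completes the induction.

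I do not expect a real obstacle. The only delicate point is the case analysis showing each $g = a f_{i_s} - b f_{j_s}$ is non-zero — this is precisely where $f_i/f_j \notin \bk$ is used, and it is the reason condition (a) has to be part of the inductive hypothesis (so that $a$ and $b$ are invertible and the manipulations $a(f_{i_s} - f_{j_s})$ and $(a-b)f_{i_s}$ are meaningful). Everything else is the routine fact that a finite union of proper closed subsets of an irreducible finite-type $\bk$-scheme omits a $\bk$-point.
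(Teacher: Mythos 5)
Your proof is correct and takes essentially the same route as the paper: the same induction on $s$, the same reduction of each instance of condition (b) to the non-vanishing of the element $a f_{i_s} - b f_{j_s}$ (which the paper phrases as non-emptiness of an open subset $U_{i_\bullet,j_\bullet}$ of the locus $Y$ where all $f_i$ are invertible), and the same use of $f_i/f_j \notin \bk$ together with irreducibility of $X$ and density of $\bk$-points. The only cosmetic difference is that you organize the case analysis into three cases where the paper uses two.
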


\begin{proof}
Let $Y$ be the open subvariety of $X$ where all the $f_i$'s are non-zero. We proceed by induction on $s$. The result is tautologically true for $s=0$. Suppose now that the result has been proven for $s-1$. Let $x_1, \ldots, x_{s-1}$ be the $\bk$-points witness this; note that these points all belong to $Y$. We now produce $x_s$.

For $i_{\bullet} \ne j_{\bullet} \in [r]^s$, let $U_{i_{\bullet},j_{\bullet}}$ be the locus of points $y \in Y$ such that
\begin{equation} \label{eq1}
f_{i_1}(x_1) \cdots f_{i_s}(y) \ne f_{j_1}(x_1) \cdots f_{j_s}(y).
\end{equation}
This is an open set. We claim that it is non-empty. There are two cases.

First, suppose that $i_s=j_s$. Then $(i_1,\ldots,i_{s-1}) \ne (j_1, \ldots, j_{s-1})$, and so $f_{i_1}(x_1) \cdots f_{i_{s-1}}(x_{s_1}) \ne f_{j_1}(x_1) \cdots f_{j_{s-1}}(x_{s-1})$ by assumption. Thus the two sides of \eqref{eq1} are different multiples of $f_{i_s}(y)=f_{j_s}(y)$, and so $U_{i_{\bullet},j_{\bullet}}=Y$.

Second, suppose that $i_s \ne j_s$. Then $f_{j_s}$ and $f_{i_s}$ are not scalar multiples of each other, by assumption, and so the two sides of \eqref{eq1} are unequal functions of $y$. Thus the claim follows.

Now let $U$ be the intersection of all the sets $U_{i_{\bullet},j_{\bullet}}$. This is a non-empty open set. We can take $x_s$ to be any $\bk$-point of it.
\end{proof}

We also require the following simple lemma:

\begin{lemma} \label{lem2}
Let $\alpha_1, \ldots, \alpha_t$ be distinct non-zero elements of $\bk$, and let $\beta_1, \ldots, \beta_t$ be elements of $\bk$ that are not all zero. Then there exists $1 \le j \le t$ such that $\sum_{i=1}^t \beta_i \alpha_i^j \ne 0$.
\end{lemma}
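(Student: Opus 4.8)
The plan is to argue by contradiction using a Vandermonde determinant. Suppose, for contradiction, that $\sum_{i=1}^t \beta_i \alpha_i^j = 0$ for every $1 \le j \le t$. I would regard this as a homogeneous system of $t$ linear equations in the $t$ unknowns $\beta_1, \dots, \beta_t$, with coefficient matrix $M = (\alpha_i^j)_{1 \le j, i \le t}$. It then suffices to show that $M$ is invertible, since that forces $\beta_1 = \dots = \beta_t = 0$, contradicting the hypothesis that the $\beta_i$ are not all zero.

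To see that $M$ is invertible, I would pull a factor of $\alpha_i$ out of the $i$-th column, writing $M = V D$ where $D = \operatorname{diag}(\alpha_1, \dots, \alpha_t)$ and $V = (\alpha_i^{j-1})_{1 \le j, i \le t}$ is the standard Vandermonde matrix. Then $\det M = \bigl( \prod_{i=1}^t \alpha_i \bigr) \cdot \prod_{1 \le i < i' \le t} (\alpha_{i'} - \alpha_i)$; the first factor is nonzero because each $\alpha_i \ne 0$, and the second is nonzero because the $\alpha_i$ are distinct. Both hypotheses on the $\alpha_i$ enter here, and this is essentially the only step requiring any care — I do not anticipate a real obstacle.

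An alternative, determinant-free route: the assumed vanishing says that $\sum_{i=1}^t \beta_i g(\alpha_i) = 0$ for every polynomial $g \in \bk[x]$ with $g(0) = 0$ and $\deg g \le t$. Since $0, \alpha_1, \dots, \alpha_t$ are $t+1$ distinct elements of $\bk$, Lagrange interpolation produces, for each $k$, such a polynomial $g$ with $g(\alpha_k) = 1$ and $g(\alpha_i) = 0$ for $i \ne k$; substituting it gives $\beta_k = 0$, again a contradiction. Either way the argument is short, and the content is entirely the nonvanishing of a Vandermonde-type determinant.
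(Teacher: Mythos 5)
Your first argument is essentially the paper's proof: the paper also observes that the matrix $(\alpha_i^j)$ has nonzero determinant by Vandermonde and concludes that it cannot annihilate the nonzero vector $(\beta_i)$. Your explicit factorization $M = VD$ is in fact slightly more careful than the paper's terse ``by the Vandermonde identity,'' since the matrix with powers $1,\dots,t$ is not literally the standard Vandermonde matrix and the hypothesis $\alpha_i \ne 0$ is needed exactly where you use it; your interpolation alternative is also correct but is not what the paper does.
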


\begin{proof}
Let $A$ be the $t \times t$ matrix with entries $A_{i,j}=\alpha_i^j$ and let $B$ be the column vector with entries $B_i=\beta_i$. The determinant of $A$ is non-zero by the Vandermonde identity, and so $AB \ne 0$. Since the $j$th row of $AB$ is $\sum_{i=1}^t \beta_i \alpha_i^j$, the result follows.
\end{proof}

We can now prove the main result:

\begin{proof}[Proof of Theorem~\ref{mainthm}]
Let $x_1, \ldots, x_r \in X$ be the points produced by Lemma~\ref{lem1} with $s=r$. For $\sigma \in S_r$, let $c_{\sigma}=f_{\sigma(1)}(x_1) \cdots f_{\sigma(r)}(x_r)$. The $c_{\sigma}$ are distinct non-zero elements of $\bk$. Let $1 \le e \le r!$ be such that $\sum_{\sigma \in S_r} \sgn(\sigma) c_{\sigma}^e \ne 0$, which exists by Lemma~\ref{lem2}. For $1 \le i \le r$, let $v_i$ be the vector $(f_i^e(x_1), \ldots, f_i^e(x_r))$. These vectors are linearly independent, as the determinant of the matrix  with columns $v_1, \ldots, v_r$ is $\sum_{\sigma \in S_r} \sgn(\sigma) c_r$. It follows that the $f_1^e, \ldots, f_r^e$ are linearly independent, as a dependency would give one between the $v_i$'s.
\end{proof}

\begin{remark}
Suppose $\bk$ is not algebraically closed. Theorem~\ref{mainthm} remains true if we assume that $R$ is geometrically integral, i.e., that $\ol{\bk} \otimes_{\bk} R$ is integral. However, it is not true if we simply assume $R$ is integral. Indeed, if $R$ is a finite extension field of $\bk$ and $r>[R:\bk]$ then $f_1^e, \ldots, f_r^e$ are linearly dependent for all $e$, since any set of $r$ elements of $R$ is linearly dependent.
\end{remark}

\begin{remark}
The upper bound of $r!$ in Theorem~\ref{mainthm} is not optimal: for $r=3$, we can take $1 \le e \le 2$ in characteristic not~2, and $e \in \{1,3\}$ in characteristic~2. It is an interesting problem to determine the optimal upper bound on $e$.
\end{remark}

\end{document}